\newcommand{\R}{\ensuremath{\mathbb{R}}}
\newtheorem {theorem} {Theorem} 
\begin{document}

\title[Dip and Buffering in a fast-slow system associated to Brain Lactate Kinetics]
{Dip and Buffering in a fast-slow system associated to Brain Lactate Kinetics}

\author[M. Lahutte-Auboin, R. Costalat, J.-P. Fran\c{c}oise and R. Guillevin]
{M. Lahutte-Auboin, R. Costalat, J.-P. Fran\c{c}oise and R. Guillevin}

\address{ M. Lahutte-Auboin: L' H\^opital d' instruction des Arm\'ees du Val-de-Gr\^ace, Service de Radiologie, 74 Bd de Port-Royal, 75005 Paris, France\\
R. Costalat: UPMC, UMI 209, UMMISCO, Universit\'e P.-M. Curie, Paris 6, 75005 Paris, France,\\
J.-P. Fran\c{c}oise: Universit\'e P.-M. Curie, Paris 6, Laboratoire Jacques--Louis Lions,
UMR 7598 CNRS, 4 Pl. Jussieu, 16-26, 75252 Paris, France\\
R. Guillevin: CHU de Poitiers, Universit\'e de Poitiers, Laboratoire MPIM, 2 rue de la Mil\`etrie
BP 577, 86021 Poitiers}
\email{Jean-Pierre.Francoise@upmc.fr}

\subjclass{Primary 34C05, 34A34, 34C14.}

\keywords{Slow-fast dynamics, control, averaging}
\date{17/07/2013}
\dedicatory{}

\begin{abstract}
High-dimensional compartmental dynamical systems have been introduced to model brain metabolism. In this article, an approach is proposed to their mathematical analysis. Reductions of these models are obtained by replacing several compartments by a control term. The analysis focuses here on lactate kinetics. A mathematical analysis of the initial dip of lactacte, observed under stimulation, and of the periodic buffering, underlying the response to a repetitive sequence of identical stimuli, can be proposed. This mathematical analysis relies on asymptotics techniques of time multi-scaled dynamical systems such as averaging along slow manifolds.
\end{abstract}

\maketitle

\section{Introduction}
Several mathematical compartmental models have been recently developped to represent the metabolism of neurons and astrocytes \cite{AC1, AC, ACMP1, ACMP2, PBASMCM, PM12}.  Such models involve too many variables to be accessible to mathematical analysis and untill now they were only discussed via numerical simulations. The viewpoint adopted here is to reduce several compartments contributions into a control, keeping very few dynamical variables, and characterize the controls which are compatible with the observations. We first discuss a two-dimensional controlled system and then move to a four-dimensional system which allows to distinguish between neurons and astrocytes. We focus on these two cases in the understanding of two important phenomena observed in experiments. One is the initial dip in the extracellular lactacte concentration , first reported {\it in vivo} by Hu and Wilson \cite{HW} using an enzyme-based lactate microsensor on rat brain hippocampus after an electrical stimulation of the perforant pathway.  The other is a periodic buffering response to the application of a periodic sequence of stimuli, also observed by Hu and Wilson, and further discussed by Aubert-Costalat-Magistretti-Pellerin \cite{ACMP1}, on the basis of numerical simulations of the Aubert-Costalat system. More precisely, the initial dip can be well explained, in the setting of fast-slow dynamics, by the theorem of existence of slow manifold, when the critical manifold is transversally attractive. The periodic buffering is mathematically justified using both averaging theory on a slow manifold and a theorem of Flatto-Levinson \cite{FlaLevin}. To make the article self-contained a short proof of this theorem is included and  the full verifications of the hypothesis required for its application are provided.
\vskip 1pt
Let us consider a (fast-slow)  dynamical system equiped with a slow control $J(t,x)$ and an imput $F(t)$:
\begin{equation}\label{S1}
\begin{array}{l}
\frac{dx}{dt}=\epsilon'[J(t,x)-C(\frac{x}{k+x}-\frac{y}{k'+y})]\\
\epsilon\frac{dy}{dt}= F(t)(L-y)+C(\frac{x}{k+x}-\frac{y}{k'+y}).\\
\end{array}
\end{equation}
The two variables $x$ and $y$ stand for the lactate concentration in an interstitial (extracellular) domain, respectively in a capillary domain. The experimental context refers to Hu and Wilson \cite{HW} where an electrical stimulus is applied and is represented by the impulsive term $F(t)$. In mathematical terms, $F(t)$ is an imput function that we assume continuous piecewise linear: $F(0)=F_0>0, F(t)=F_1 t\in [t_0,t_1]$, $F(t)=F_0, t\in[t_f,T]$. The time scales $\epsilon, \epsilon'$ are assumed to be small enough within the domain of validity of asymptotics methods of fast-slow systems. 
\vskip 1pt
The term $C(\frac{x}{k+x}-\frac{y}{k'+y})$ stands for a cotransport through the brain-blood-boundary. Justifications for such an expression can be found, for instance, in \cite{KS}. The full justification of the model stands on several previous articles \cite{ACMP1}, \cite{G}, \cite{CFLG}, \cite{LGFC} related to physiology and to physiopathology where emerged gradually the pertinence of the time multi-scaled approach. Finally the interaction with a third intracellular compartment (which includes both neurons and astrocytes) is embodied in a control term $J(t,x)$. The aim of this modeling is to characterize the control terms compatible with the observation of the transients (and asymptotics) of the solutions $(x(t), y(t))$. Of course, such a controlled dynamical system could be applied in a wide variety of context (not necessarily lactate kinetics) which would encompass capillary and bbb transport (or transport through cellular membrane). Note finally that $T,k,k'$ are seen as fixed parameters.
\vskip 1pt
In a second part of the article, we further discuss a natural extension of the system where the intracellular compartment splits into neurons and astrocytes and which  includes transports through cell membranes. Following the choice made in \cite{AC}, a direct transport from capillary to intracellular astrocytes is included. The imput $F(t)$ is kept and we add to the control $J_0(t,x,u,v)$, still applied to the intracellular compartment, two other independent controls $J_1(t,x, u,v)$ and $J_2(t, x,u,v )$ (resp.) responsible for the intracellular lactate dynamics inside the neurons (resp.) astrocytes. This yields a $4$-dimensional fast-slow dynamics with slow control:

\begin{equation}\label{S2}
\begin{array}{l}
\frac{dx}{dt}=\epsilon'[J_0(t,x,u,v)+C_1(\frac{u}{k_n+u}-\frac{x}{x+k}) +C_2(\frac{v}{k_a+v}-\frac{x}{x+k})-C(\frac{x}{k+x}-\frac{y}{k'+y})]\\
\frac{du}{dt}=\epsilon'[J_1(t,x,u,v)-C_1(\frac{u}{k_n+u}-\frac{x}{x+k})] \\
\frac{dv}{dt}=\epsilon'[J_2(t,x,u,v)-C_2(\frac{v}{k_a+v}-\frac{x}{x+k})-C_a(\frac{v}{k_a+v}-\frac{y}{k'+y})]\\
\epsilon\frac{dy}{dt}= F(t)(L-y)+C(\frac{x}{k+x}-\frac{y}{k'+y})+C_a(\frac{v}{k_a+v}-\frac{y}{k'+y}).
\end{array}
\end{equation}

The same analysis can be extended to this $4$-dimensional system and further new possibilities offered by this extension are discussed using a quasi-stationary approximation.  

\section{Dip and buffering in the $2$-dimensional system}

\subsection{Qualitative analysis of the system with fixed controls: stationary point and slow manifold}

Recall shortly the local analysis of the $2$-dimensional system obtained by freezing the imput and the control term:

\begin{equation}\label{S0}
\begin{array}{l}
\frac{dx}{dt}=\epsilon'[J-C(\frac{x}{k+x}-\frac{y}{k'+y})]\\
\epsilon\frac{dy}{dt}= F(L-y)+C(\frac{x}{k+x}-\frac{y}{k'+y}).\\
\end{array}
\end{equation}

\vskip 1pt
\begin{theorem}
The system ($\ref{S0}$) displays a unique stationary point which is of node type.
\end{theorem}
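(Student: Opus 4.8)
The plan is to locate the equilibrium first and then linearize. Setting both right-hand sides of (\ref{S0}) to zero and \emph{adding} the two equations causes the transport term $C(\frac{x}{k+x}-\frac{y}{k'+y})$ to cancel, leaving $J+F(L-y)=0$. This fixes the equilibrium ordinate uniquely as $y^*=L+J/F$. Substituting back into the first equation gives $\frac{x}{k+x}=\frac{J}{C}+\frac{y^*}{k'+y^*}$; since the map $x\mapsto\frac{x}{k+x}$ is strictly increasing from $0$ to $1$ on $(0,\infty)$, there is exactly one admissible $x^*$ whenever the right-hand side lies in $(0,1)$, which holds in the physically relevant parameter range. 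This settles existence and uniqueness of the stationary point.

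For the type, I would compute the Jacobian at $(x^*,y^*)$. Writing $a=\frac{k}{(k+x^*)^2}>0$ and $b=\frac{k'}{(k'+y^*)^2}>0$ for the (positive) derivatives of the two Michaelis--Menten terms, the Jacobian is
\[
M=\begin{pmatrix}-\epsilon' Ca & \epsilon' Cb\\[2pt] \dfrac{Ca}{\epsilon} & -\dfrac{F+Cb}{\epsilon}\end{pmatrix}.
\]
Its trace $T=-\epsilon' Ca-\frac{F+Cb}{\epsilon}$ is manifestly negative, and a short computation gives the determinant $D=\frac{\epsilon'}{\epsilon}CaF>0$, the cross terms $C^2ab$ cancelling. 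Hence both eigenvalues have negative real part and the equilibrium is at least asymptotically stable.

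The only genuine difficulty is to rule out a focus, i.e.\ to show the discriminant $T^2-4D$ is nonnegative so that the eigenvalues are real; a priori a planar stable rest point can spiral, and one might fear the two scales $\epsilon,\epsilon'$ enter here. The clean route is to observe that, after multiplying by $\epsilon^2$, the discriminant becomes a quadratic in the single combined quantity $u=\epsilon\epsilon' Ca$:
\[
\epsilon^2\,(T^2-4D)=u^2+2(Cb-F)\,u+(Cb+F)^2 .
\]
Its discriminant \emph{in $u$} equals $4(Cb-F)^2-4(Cb+F)^2=-16\,CbF<0$, while the leading coefficient is positive, so the expression is strictly positive for every real $u$. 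Therefore $T^2-4D>0$ for all admissible parameters, independently of the scale separation; the eigenvalues are real, distinct, and (with $T<0$, $D>0$) both negative. The stationary point is thus a stable node. I expect the equilibrium bookkeeping and the determinant sign to be routine, and anticipate the unconditional positivity of the discriminant to be the one step requiring the trick above rather than a direct estimate.
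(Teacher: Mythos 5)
Your proposal is correct and follows essentially the same route as the paper: the equilibrium is located by adding the two equations to get $y^\ast=L+J/F$ and then inverting the Michaelis--Menten term for $x^\ast$, and the node classification comes from the trace/determinant/discriminant of the Jacobian, exactly as in the paper. The only cosmetic difference is in the last step: where the paper bounds the discriminant below by the perfect square $\left(A-\frac{B+F}{\epsilon}\right)^2\geq 0$ (using $B>0$), you regard $\epsilon^2(T^2-4D)$ as a quadratic in $u=\epsilon\epsilon' Ca$ with negative discriminant $-16CbF$ --- the same positivity fact in different algebraic packaging (and you retain the factor $\epsilon'$ in the Jacobian, which the paper silently suppresses).
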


\begin{proof}
Solving the system:
\begin{equation}
\begin{array}{l}
0=J-C(\frac{x}{k+x}-\frac{y}{k'+y})\\
0= F(L-y)+C(\frac{x}{k+x}-\frac{y}{k'+y}),\\
\end{array}
\end{equation}
yields
\begin{equation}
y=L+\frac{J}{F}=y_0,
\end{equation}

which is always positive. This displays:
\begin{equation}
x=\frac{k(\frac{J}{C}+\frac{y_0}{k'+y_0})}{1-(\frac{J}{C}+\frac{y_0}{k'+y_0})}=x_0.
\end{equation}
There is, thus, a unique stationary point $(x_0,y_0)$ and we further discuss its nature.
\vskip 1pt
The eigenvalues $\lambda_{\pm}$ of the Jacobian of the vector field solve the equation:
\begin{equation}
(A+\lambda)(\frac{B+F}{\epsilon}+\lambda)-\frac{AB}{\epsilon}=0,
\end{equation}
with
\begin{equation}
A=\frac{kC}{(k+x)^2}, B=\frac{k'C}{(k'+y)^2}.
\end{equation}
The eigenvalues are so that:
\begin{equation}
\begin{array}{l}
\lambda_++\lambda_-=-(A+\frac{B+F}{\epsilon})<0,\\
\lambda_+\lambda_-=AF/\epsilon>0,
\end{array}
\end{equation}
hence the stationary point is stable. Furthermore,
\begin{equation}
\begin{array}{l}
\Delta=(A+\frac{B+F}{\epsilon})^2-4\frac{AF}{\epsilon}=\\
A^2+2A(\frac{B+F}{\epsilon})+(\frac{B+F}{\epsilon})^2-4\frac{AF}{\epsilon}>\\
A^2-2A(\frac{B+F}{\epsilon})+(\frac{B+F}{\epsilon})^2\geq 0,\\
\end{array}
\end{equation}
hence this unique stationary point is a node.
\end{proof}
Next, we use the fast-slow nature of the system to analyse its global phase portrait. The full phase portrait (for any values of $x$ and $y$) requires some attention because the vector field is rational and displays poles. But we can note that the vector field is entrant on the boundary of the "meaningful domain" where $x\geq 0, y\geq 0$ represents concentrations.

\begin{theorem}
For $\epsilon$ small enough, the system $(\ref{S0})$ displays a slow attractive invariant manifold which is a graph (parabola) over the $y$-axis and a globally attractive stationary point in restriction to the positive quadrant. 
\end{theorem}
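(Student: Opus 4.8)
The plan is to read $(\ref{S0})$ as a genuine fast--slow system, with $y$ the fast variable (its derivative carries the factor $1/\epsilon$) and $x$ the slow one, and to invoke geometric singular perturbation theory. First I would write the fast right-hand side as $h(x,y)=F(L-y)+C\bigl(\tfrac{x}{k+x}-\tfrac{y}{k'+y}\bigr)$ and set $\mathcal{M}_0=\{h=0\}$ for the critical manifold. Since $h$ is affine in $\tfrac{x}{k+x}$, the relation $h=0$ solves explicitly for $x$: putting $\Phi(y)=F(y-L)+C\tfrac{y}{k'+y}$ one finds $x=g(y)=\tfrac{k\,\Phi(y)}{C-\Phi(y)}$ on the range $0\le\Phi(y)<C$. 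This already exhibits $\mathcal{M}_0$ as a graph over the $y$-axis; the quadratic dependence of $h=0$ on $y$ is the source of its parabolic shape. Because $\Phi$ is increasing, $g$ is a monotone bijection onto its image, so the graph is strictly monotone.

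The second ingredient is transverse attractivity. I would compute $\partial_y h=-F-\tfrac{k'C}{(k'+y)^2}=-(F+B)<0$, with $B$ as in the previous theorem. Thus along $\mathcal{M}_0$ the nontrivial eigenvalue of the fast linearisation is uniformly negative, so $\mathcal{M}_0$ is normally hyperbolic and uniformly attracting on every compact piece inside the positive quadrant. Fenichel's theorem then yields, for $\epsilon$ small enough, a locally invariant slow manifold $\mathcal{M}_\epsilon$ that is $O(\epsilon)$-close to $\mathcal{M}_0$, inherits its attractivity, and---being a small perturbation of the monotone graph $\mathcal{M}_0$---is again a graph over the $y$-axis. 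This proves the first assertion.

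For global attractivity I would work in the plane. By the previous theorem there is a unique stationary point $(x_0,y_0)$, a stable node, so local attractivity is in hand and the task is to globalise it on the positive quadrant. I would first record that the field is entrant on $\{x=0\}$ and on $\{y=0\}$, and then, by inspecting the signs of $\dot x$ and $\dot y$ for large $x$ and large $y$ (where $\tfrac{x}{k+x}\to1$ and the $-Fy$ term dominate), exhibit a large positively invariant absorbing rectangle, so that every forward orbit is bounded with nonempty compact $\omega$-limit set. The decisive step is Bendixson's criterion: the divergence of the field equals $-\epsilon'A-\tfrac1\epsilon(F+B)$, with $A,B$ the quantities of the previous theorem, which is strictly negative throughout the positive quadrant, so no periodic orbit can lie in it; and since the sole equilibrium is a node, no homoclinic polycycle exists either. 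Poincar\'e--Bendixson then forces every $\omega$-limit set to be $\{(x_0,y_0)\}$, which is the claimed global attractivity.

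I expect the main obstacle to be the global part rather than the local or singular-perturbation analysis, which is essentially forced. Concretely, one must rule out both escape to infinity and closed orbits while the vector field is rational and carries poles at $x=-k$ and $y=-k'$. Confining the whole argument to the positive quadrant, where those poles are absent, the divergence computation is licit, and the absorbing rectangle can be built, is exactly what makes Bendixson's criterion and Poincar\'e--Bendixson applicable; verifying the inward-pointing condition on the far boundary of that rectangle is the point that will require the most care.
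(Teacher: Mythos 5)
Your proposal is correct, and its first half coincides with the paper's argument: you solve the fast equation for $x$ as a function of $y$ (your $g(y)=k\Phi(y)/(C-\Phi(y))$ is exactly the paper's $x_0=kB(y_0)/(C-B(y_0))$), you get transverse attractivity from $\partial_y h=-F-k'C/(k'+y)^2<0$, and you conclude with a geometric singular perturbation theorem — Fenichel in your case, Tikhonov in the paper's (for the stated claim of an attractive \emph{invariant} manifold, Fenichel is if anything the more apt citation). Where you genuinely diverge is the global attractivity. The paper stays inside the fast--slow formalism: by Tikhonov, every orbit in the positive quadrant is rapidly attracted to the slow manifold, and along it the one-dimensional slow flow converges to the unique stable equilibrium. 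You instead use planar topology: the divergence $-\epsilon' A-\tfrac{1}{\epsilon}(F+B)$ is strictly negative, so Bendixson--Dulac excludes closed orbits; the unique equilibrium is a stable node, so no graphic can occur; and Poincar\'e--Bendixson on a bounded absorbing region forces every $\omega$-limit set to be the equilibrium. Your route buys something the paper's does not — it is valid for \emph{every} $\epsilon,\epsilon'>0$, not just asymptotically, and it replaces the paper's somewhat informal ``conjugated to the slow flow'' step by a complete argument — while the paper's route exhibits the mechanism (fast collapse onto the slow manifold, then slow drift) that the rest of the article exploits for the dip and the buffering. One caveat on the step you yourself flagged: a rectangle $[0,X]\times[0,Y]$ with $Y$ arbitrarily large is \emph{not} positively invariant, because near the top-right corner $\frac{x}{k+x}$ and $\frac{y}{k'+y}$ are both close to $1$ and $\dot x\approx\epsilon' J>0$. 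You must first cap the height at $Y^*+\delta$, where $Y^*$ is the root of $F(L-y)+Ck'/(k'+y)=0$; the fast dynamics confines every orbit into this strip, and inside it $\dot x<0$ for large $x$ precisely because the condition for the equilibrium to lie in the open positive quadrant, $\frac{J}{C}+\frac{y_0}{k'+y_0}<1$ (implicit in the paper's Theorem 1 as well), guarantees $J<Ck'/(k'+Y^*)$. With the rectangle built this way, your argument closes.
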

\begin{proof}
The critical manifold of $(\ref{S0})$ is given by the equation:

\begin{equation}
\Phi(x,y)=F(L-y)+C(\frac{x}{k+x}-\frac{y}{k'+y})=0.
\end{equation}
Given $y=y_0$, there exists a single solution $x=x_0$ to the equation $\Phi(x,y_0)=0$. Indeed, write
\begin{equation}
B(y_0)=\frac{Cy_0}{k'+y_0}-F(L-y_0),
\end{equation}
yields
\begin{equation}
x=x_0:=\frac{kB(y_0)}{C-B(y_0)}.
\end{equation}
Given $x=x_0$, there are two solutions $y$ to the equation $\Phi(x_0,y)=0$ obtained by solving

\begin{equation}
y^2-[F(L-k')-\frac{Ck}{k+x_0}]y-(\frac{k'Cx_0}{k+x_0}+FLk')=0
\end{equation}

Note that for $x_0>0$ the product of the two roots is $-(\frac{k'Cx_0}{k+x_0}+FLk')<0$. Hence the parabola $\Phi(x,y)=0$ intersects the $y$-axis in one positive and one negative value of $y$. This shows that the critical manifold is also a graph over the $x$-axis in restriction to the positive quadrant $x\geq 0, y\geq 0$.
\vskip 1pt
Note now that
\begin{equation}
\Phi'_y(x,y)=-F-\frac{Ck'}{(k'+y)^2}<0,
\end{equation}
hence the critical manifold is attractive. Usual techniques of study of fast-slow systems can be used. In particular, Tikhonov theorem can be applied \cite{T} (For a discussion of the Tikhonov theorem see \cite{v}). In restriction to the positive quadrants, for $\epsilon$ small enough, all orbits of the flow display a fast attraction to a slow manifold which is $\epsilon$-close to the critical manifold. Along this slow manifold, the flow is conjugated to the restriction of the slow flow on the critical manifold and it converges to the stable stationary point.
\end{proof}

\subsection{Geometrical explanation for the dip}

Let us consider now the impulsive dynamics defined in the introduction by the piecewise constant function $F(t)$. For fixed value $F(t)=F_0, F(t)=F_1$, we can apply the preceding results. As $F(t)$ switches from one value to the other, the critical manifold jumps from one level to another level. We can now interpret geometrically the numerical result of the article \cite{ACMP1}. The authors choose a trapezoidal function $F(t)$, which varies linearly from $F(0)=F_0$ to $F(t_i)=(1+\alpha_F)F_0$, then remains constant ($=(1+\alpha_F)F_0)$ between $t_i$ and $t_f$ and then decreases linearly back to $F_0$ between $t_f$ and $t_f+t_i$.
\vskip 1pt
Before the imput, a generic solution of the system approaches rapidly the slow manifold and then slowly moves to the stationary point. After this transient, we can assume that the initial data is the stationary point. Then, the imput is applied. The main effect is that the slow manifold is moved to a lower position and as a consequence, the solution moves quickly toward the slow manifold which is now below. Then it moves slowly to the new stationary point which is slightly displaced on the left. If the control $J(t,x)$ is applied, it pushes the slow nullcline farther left. Then the solution goes farther left as well in pursuit of the stationary point. This initiates a notable dip of the extracellular lactate concentration $x$. Of course, if the control $J(t,x)$ is turn off, the solution goes back to the initial stationary point.
\vskip 1pt
Although, this is not quite the experimental situation observed by Hu and Wilson. Indeed, they observed that the initial dip is followed by an augmentation of the extracellular lactate concentration $x$ before returning back to the equilibria. This motivates \cite{ACMP1} the choice of a plausible control $J(t)$. For some initial time $t'_i$, $J(t)$ is linearly increased from $J_0$ to some level $J_1$ where it remains constant for a while and the system displays the dip. Then $J(t)$ is decreased below the equilibrium level $J_0$ to $J_{-1}$. As a consequence, the slow nullcline is now moved to the right, and the extracellular concentration $x$ increases as observed. After some time where $J(t)$ is kept constant at this lower level, it is then removed back to its equilibrium $J(0)$. 
\vskip 1pt
\subsection{Geometrical analysis of the buffering: frequency locking in a forced fast-slow system}

In the same experimental conditions, Hu and Wilson considered also the feedback to a sequence of repeated stimuli. They observed a buffering of the interstial lactate concentration which tends rather quickly to an asymptotically frequency locking response. Latter, in the article \cite{ACMP1}, this was confirmed by numerical simulations. We show indeed, by averaging theory on a slow manifold, that the impulsive system with a periodic imput $F(t)$ of period $T$ displays, for some large choice of $T$-periodic controls $J(t,x)$, an attractive periodic orbit, in the limit where  $\epsilon'$ and $\epsilon$ are small enough. This is the main theorem involved:
\vskip 1pt
\begin{theorem}
Let us  consider a fast-slow $(p,q)$ system in balanced form:
\begin{equation}\label{S3}
\begin{array}{l}
\frac{dx}{dt}={\epsilon'}f(x,y,t)\\
{\epsilon}\frac{dy}{dt}=g(x,y,t),\\
x=(x_1,...,x_p)\in D\subset \R^p, y=(y_1,...,y_q)\in G\subset \R^q
\end{array}
\end{equation}
where $f,g$ are $T$-periodic in $t$. Assume that
\vskip 1pt
\noindent a- The equation $g(x,y,t)=0$ is solved by $y=\phi(x,t)$ which is an asymptotically stable solution of the equation:
\begin{equation}
\frac{dy}{d\tau}=g(x,y,\tau),
\end{equation}
uniformly in $x\in D$ and $\tau\in\R_+$ (i.e. there exists a $\mu$, uniform in $x, \tau$ such that ${\rm Re}(\Lambda(g_y))\leq -\mu<0$).
\vskip 1pt
Let $A(t)$ be the $T$-periodic matrix defined as 

\begin{equation}
A(t)=f_x(x,\phi(x,t),t)-f_yg_y^{-1}g_x(x,\phi(x,t),t). 
\end{equation}

\noindent b-Assume that the linear equation:
\begin{equation}
\frac{d\xi}{dt}=A(t)\xi,
\end{equation}
displays no other $T$-periodic solution than $\xi=0$.
\vskip 1pt
\noindent c-There exists an isolated solution $x_0$ to the equation:
\begin{equation}
\frac{1}{T}\int_0^T g(x,\phi(t,x))dt=0.
\end{equation}
Then the fast-slow system displays a $T$ periodic solution with initial data $x_0,y_0=\phi(0,x_0)$ up to $O(\epsilon\epsilon')$.
\end{theorem}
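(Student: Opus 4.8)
The plan is to peel off the two time scales one at a time: first use the fast relaxation (small $\epsilon$) to collapse the dynamics onto the non-autonomous slow manifold via Tikhonov's theorem, and then treat the resulting slow equation as a $T$-periodically forced system to which the averaging principle and the Flatto--Levinson mechanism apply (small $\epsilon'$). The zero $x_0$ of hypothesis (c) will be the seed of a genuine $T$-periodic orbit, and hypothesis (b) will supply the non-degeneracy that makes the period map a local diffeomorphism near it.

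First I would invoke hypothesis (a). Since $y=\phi(x,t)$ solves $g=0$ and is a uniformly asymptotically stable rest point of the frozen fast equation $dy/d\tau=g(x,y,\tau)$ with ${\rm Re}(\Lambda(g_y))\le-\mu<0$, Tikhonov's theorem \cite{T} (as in the slow-manifold theorem above) guarantees that for $\epsilon$ small the flow is attracted to a slow manifold $y=\phi(x,t)+O(\epsilon)$ on which the slow variable obeys
\[
\frac{dx}{dt}=\epsilon'\,\tilde f(x,t)+O(\epsilon\epsilon'),\qquad \tilde f(x,t):=f(x,\phi(x,t),t),
\]
which is again $T$-periodic in $t$. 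Differentiating the identity $g(x,\phi(x,t),t)\equiv 0$ gives $\phi_x=-g_y^{-1}g_x$, so by the chain rule the Jacobian of this reduced field is exactly $\partial_x\tilde f=f_x-f_yg_y^{-1}g_x=A(t)$; thus the matrix $A(t)$ of the statement is precisely the variational matrix of the slow flow along the critical manifold. I would also read the integrand of hypothesis (c) as $f$ rather than $g$: on the critical manifold $g(x,\phi(x,t),t)\equiv 0$ identically, so the averaged object capable of isolating $x_0$ is $\bar f(x):=\frac{1}{T}\int_0^T \tilde f(x,t)\,dt$, and (c) then asserts $\bar f(x_0)=0$ with $x_0$ isolated.

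Next I would build the period map $P$ of the reduced slow system: let $x(t;x_\ast)$ be the solution starting on the slow manifold with $x(0)=x_\ast$ and set $P(x_\ast)=x(T;x_\ast)$. Integrating over one period and using $x(t;x_\ast)=x_\ast+O(\epsilon')$ yields the displacement
\[
P(x_\ast)-x_\ast=\epsilon'\int_0^T \tilde f(x(t;x_\ast),t)\,dt+O(\epsilon\epsilon')=\epsilon'\,T\,\bar f(x_\ast)+O(\epsilon'^2)+O(\epsilon\epsilon'),
\]
so by (c) the leading term vanishes at $x_\ast=x_0$ and the point $(x_0,y_0)=(x_0,\phi(0,x_0))$ is periodic up to the combined correction, which accounts for the stated accuracy. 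To upgrade this to an exact orbit I would differentiate $P$: the linearized period map is $DP(x_0)=\Phi(T)$, the monodromy of the variational equation $\dot\xi=\epsilon'A(t)\xi$. Hypothesis (b) says $\dot\xi=A(t)\xi$ admits no nontrivial $T$-periodic solution, i.e. its monodromy has no unit Floquet multiplier; by Floquet theory this forces $DP(x_0)-I$ to be invertible, and the implicit function theorem (the content of Flatto--Levinson \cite{FlaLevin}) then produces a unique fixed point of $P$ near $x_0$, that is, a genuine $T$-periodic orbit of the slow flow, which becomes an orbit of the full system once the $O(\epsilon)$ slaving of $y$ to $\phi$ is reinstated.

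The step I expect to be the main obstacle is the honest compounding of the two asymptotic reductions, in particular recovering the sharp $O(\epsilon\epsilon')$ order rather than the crude $O(\epsilon)+O(\epsilon')$. One must verify that the Tikhonov slow manifold persists uniformly over the whole period and that its $O(\epsilon)$ error does not accumulate when integrated against the slow field over time $T$, so that the displacement and its derivative inherit clean bounds; this uniformity is exactly where hypothesis (a)'s single spectral gap $-\mu$, uniform in $x,\tau$, is essential. The second delicate point is translating hypothesis (b) from a statement about $A(t)$ into invertibility of $\Phi(T)-I$ at the correct order: because the variational matrix carries the factor $\epsilon'$, one must check that the Floquet condition on $A(t)$ indeed keeps $\Phi(T)-I$ invertible uniformly down to $\epsilon=0$, so that the implicit function theorem applies with constants independent of the small parameters. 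Once these two uniformity estimates are secured, the remaining computations are routine.
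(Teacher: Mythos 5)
Your route is genuinely different from the paper's, and mostly sensible. The paper works in the opposite order: it first applies averaging (hypothesis (c), which you correctly read as involving $f$ rather than $g$ --- the integrand $g(x,\phi(x,t),t)$ vanishes identically, and the paper's own Step 4 indeed uses $f$) to the slow limit $\frac{dx}{dt}=f$, $0=g$, producing a $T$-periodic solution $(\theta(t),\phi(t))$ of the reduced problem for $\epsilon'$ small; it then lifts this solution to $\epsilon>0$ by re-proving the Flatto--Levinson theorem, i.e.\ by the change of variables $\xi=x-\theta$, $w=\eta+g_y^{-1}g_x\,\xi$, and a fixed-point integral equation solved by iteration, in which the $1/\epsilon$ factors are killed by the kernel bound $|\Phi(t,s)|\le K\mathrm{e}^{-\mu(t-s)/\epsilon}$ and hypothesis (b) serves only to provide the periodic Green operator of the equation $\frac{d\xi}{dt}=A(t)\xi+b(t)$. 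You instead reduce in $\epsilon$ first and then run a period map in $\epsilon'$. That is viable in principle, but note that what you need (an exact, $T$-periodic, locally invariant, attracting slow manifold carrying an honest reduced ODE) is Fenichel's theorem (time treated as an angle variable), not Tikhonov's, which gives only finite-time tracking of solutions; a period-map and implicit-function argument cannot run on an approximation statement.

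The genuine gap is your non-degeneracy step. Since the displacement to be annihilated is $P(x_*)-x_*=\epsilon'[\,T\bar f(x_*)+O(\epsilon')+O(\epsilon)\,]$ with $\bar f(x)=\frac{1}{T}\int_0^T f(x,\phi(x,t),t)\,dt$, the implicit function theorem must be applied to $(P(x_*)-x_*)/\epsilon'$, and what it requires is invertibility of $\int_0^T A(t)\,dt=T\,D\bar f(x_0)$ --- not your claim that (b) ``forces $DP(x_0)-I$ to be invertible by Floquet theory.'' Indeed $DP(x_0)=I+\epsilon'\int_0^T A(t)\,dt+O(\epsilon'^2)+O(\epsilon\epsilon')$ collapses to the identity as $\epsilon'\to 0$, so invertibility of $DP(x_0)-I$ at the needed scale is governed by the averaged Jacobian, and for $p\ge 2$ that condition is logically independent of hypothesis (b): a constant $A$ with eigenvalues $\pm 2\pi i/T$ has invertible average while its monodromy is exactly $I$ (so (b) fails), and conversely there are non-commuting piecewise-constant examples with $\int_0^T A(t)\,dt=0$ whose monodromy has no unit multiplier (so (b) holds while the average is singular). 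The two conditions coincide only in the scalar case $p=1$ of the paper's application. So the point you flagged as ``delicate'' is not a missing uniformity estimate: as stated the implication is false, and your proof actually needs $\det D\bar f(x_0)\neq 0$, which is also what the ``usual averaging theorem'' requires (``isolated'' in (c) is not sufficient). In fairness, the theorem's own hypotheses carry the same tension --- the paper's worked application even writes $A(t)$ with the factor $\epsilon'$ inside --- but the paper's proof never rests on the implication that your argument makes explicit and depends on.
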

This is typically a situation of averaging in a slow manifold (cf. for instance in \cite{Ver}). Indeed the theorem follows from the usual averaging theorem (see for instance \cite{f}, \cite{v}) and a theorem of \cite{FlaLevin}. For sake of completeness we include a proof here which follows the lines of the article \cite{FlaLevin}:
\begin{proof}
1-Assume that the slow limit of the equation:
\begin{equation}
\begin{array}{l}
\frac{dx}{dt}=f(x,y,t)\\
0=g(x,y,t)
\end{array}
\end{equation}
displays a $T$-periodic solution $x=\theta(t), y=\phi(t)$. Change first variable: $\xi=x-\theta(t), \eta=y-\phi(t)$ and get:

\begin{equation}
\begin{array}{l}
\frac{d\xi}{dt}=f_x \xi+f_y \eta+R,\\
{\epsilon}\frac{d\eta}{dt}=g_x\xi+g_y\eta+S.
\end{array}
\end{equation}
Then eliminate the term $g_x \xi$ by changing $\xi$ into $w$ so that $w=y-\phi(t)+U.\xi$, $U=g_y^{-1}g_x$, this yields:
\begin{equation}
\begin{array}{l}
\frac{d\xi}{dt}=(f_x-f_yU)\xi+f_y w+F\\
{\epsilon}\frac{dw}{dt}=g_y w+G.
\end{array}
\end{equation}
2- Consider first equation ${\epsilon}\frac{dw}{dt}=g_y w$. 
The fundamental matrix (or Green function) $\Phi(s,t)$ of
\begin{equation}
{\epsilon}\frac{dw}{dt}=Q(t)w, {\rm Re}(\Lambda(Q(t))\leq -\mu,
\end{equation}
satisfies $\mid \Phi(s,t,\epsilon)\mid\leq K{\rm e}^{-\mu(t-s)/\epsilon}$.
\vskip 1pt
3-Write $A(t)=f_x-f_yU$, assumption is the only $T$-periodic solution of
\begin{equation}
\frac{d\xi}{dt}=A(t)\xi
\end{equation}
is $\xi=0$. Write $\Psi(s,t)$ the Green function of this linear homogeneous equation. Assumption implies that for any ($T$-periodic) $b(t)$ the only $T$-periodic solution of the linear non-homogeneous equation:
\begin{equation}
\frac{d\xi}{dt}=A(t)\xi+b(t),
\end{equation}
writes
\begin{equation}
\xi(t)=\int_0^T \Psi(t,s)b(s)ds.
\end{equation}
\vskip 1pt
The equation
\begin{equation}
\begin{array}{l}
\frac{d\xi}{dt}=(f_x-f_yU)\xi+f_y w+F\\
{\epsilon}\frac{dw}{dt}=g_y w+G.
\end{array}
\end{equation}
displays a $T$-periodic solution if and only if the integral equation:
\begin{equation}
\begin{array}{l}
\xi(t)=\int_0^T \Phi(t,s)[f_y(s)w(s)+H(s,\xi(s),w(s))ds\\
w(t)=\Psi(t,0)[I-\Phi(T,0)]^{-1}\frac{1}{\epsilon}\int_0^T \Phi(T,s)G(s)ds+\frac{1}{\epsilon}\int_0^t \Phi(t,s)G(s)ds,
\end{array}
\end{equation}
has a solution. To prove that this integral equation has a solution, we apply iterative method. The Lipschitz constant might be very high by the presence of $\frac{1}{\epsilon}$ but this factor is killed by the flat estimate
$\mid \Phi(s,t,\epsilon)\mid\leq K{\rm e}^{-\mu(t-s)/\epsilon}$. Note that the convergence to the periodic orbit is very fast.
\vskip 1pt
4-If there exists an isolated solution $x_0$ to the equation:
\begin{equation}
\frac{1}{T}\int_0^T f(x,\phi(t,x),t)dt=0.
\end{equation}
then, by the usual averaging theorem,  the system
\begin{equation}
\begin{array}{l}
\frac{dx}{dt}=f(x,y,t)\\
0=g(x,y,t)
\end{array}
\end{equation}
displays a $T$-periodic solution for $\epsilon'$ small enough and we can apply what precedes.
\end{proof}
To conclude this part, we have to check the conditions of applications of the previous theorem to the system ($\ref{S1}$).
\vskip 1pt
\noindent a- The computations made on paragraph 2 are still valid when $F=F(t)$ is time-dependent. We have seen that the equation

\begin{equation}
g(x,y,t)=F(t)(L-y)+C(\frac{x}{k+x}-\frac{y}{k'+y})=0,
\end{equation}

equivalent to:
\begin{equation}
y^2+y[k'+\frac{Ck}{(k+x)F(t)}-L]-k'L-k'\frac{Cx}{(k+x)F(t)}=0,
\end{equation}

solves into
\begin{equation}
y=\phi(x,t)=\frac{1}{2}[-k'-\frac{Ck}{(k+x)F(t)}+L+\sqrt{\Delta(t)}],
\end{equation}

with
\begin{equation}
\Delta(t)=[k'+\frac{Ck}{(k+x)F(t)}-L]^2+4k'L+4k'\frac{Cx}{(k+x)F(t)}>0.
\end{equation}

The only point to be checked is that

\begin{equation}
g'_y(x,y,t)=-F(t)-C\frac{k'}{(k'+y)^2}\leq -\mu=-{\rm Min}F(t)-\frac{C}{k'}<0.
\end{equation}

\noindent b- The matrix $A(t)$ is in our case a scalar function which writes:

\begin{equation}
A(t)={\epsilon'}[J_x-\frac{kC}{(k+x)^2}\frac{F(t)}{F(t)+\frac{k'C}{(k'+y)^2}}].
\end{equation}

At this point, if we consider the special case $J(t,x)=J(t)$ as in the article \cite{ACMP1}, we can check the condition as $A(t)<0$. 
In general, it looks natural to impose $J_x\leq 0$.

\noindent c- This yields the slow equation in the standard form (for averaging theory):

\begin{equation}
\frac{dx}{dt}=\epsilon' F(x,t,\epsilon)=\epsilon'\{J(t)+F(t)[L(t)-\phi(x,t)]\}+O(\epsilon\epsilon').
\end{equation}

Denote:

\begin{equation}
\begin{array}{l}
\overline{J}(x)=\frac{1}{T}\int_0^T J(t,x)dt,\\
\overline{F}=\frac{1}{T}\int_0^T F(t)dt,
\end{array}
\end{equation}

the averaging of the impulse $F(t)$ and of the periodic control $J(t,x)$. The averaging equation writes:

\begin{equation}
\begin{array}{l}
\frac{dx}{dt}={\epsilon'}\overline{f}(x),\\
\overline{f}(x)=\overline{J}(x)+L\overline{F}-\frac{1}{T}\int_0^T F(t)\phi(t,x)dt.
\end{array}
\end{equation}

This makes the averaging condition $\overline{f}(x)=0$ quite practical to compute for piecewise linear impulse and controls in our case.

\section{Extension to the $4$-dimensional system}

An important issue expected of the large compartmental models (\cite{AC1, AC, ACMP1, ACMP2}) is to represent the subtle metabolic balance between neurons and astrocytes (\cite{PM94, JMW, JAPMW, PM12}) . It seems tempting to push further the reduced model to include two more compartments (intracellular neurons and intracellular astrocytes). We keep focusing on lactates and use the same rules of co-transports.
Following these lines, we find the $4$-dimensional system:

\begin{equation}\label{S4}
\begin{array}{l}
\frac{dx}{dt}=\epsilon'[J_0(t,x,u,v)+C_1(\frac{u}{k_n+u}-\frac{x}{x+k}) +C_2(\frac{v}{k_a+v}-\frac{x}{x+k})-C(\frac{x}{k+x}-\frac{y}{k'+y})]\\
\frac{du}{dt}=\epsilon'[J_1(t,x,u,v)-C_1(\frac{u}{k_n+u}-\frac{x}{x+k})] \\
\frac{dv}{dt}=\epsilon'[J_2(t,x,u,v)-C_2(\frac{v}{k_a+v}-\frac{x}{x+k})-C_a(\frac{v}{k_a+v}-\frac{y}{k'+y})]\\
\epsilon\frac{dy}{dt}= F(t)(L-y)+C(\frac{x}{k+x}-\frac{y}{k'+y})+C_a(\frac{v}{k_a+v}-\frac{y}{k'+y}).
\end{array}
\end{equation}
The extracellular lactate concentration is still denoted $x$, as well as the intracapillar concentration $y$. We denote $u$ the intra-neuron concentration and $v$ the intra-astrocyte concentration.

\subsection{System with fixed controls, stationary point}

We consider first the system obtained by freezing the imput $F(t)$ and the three controls $J_i(t,x,u,v), i=0,1,2$:

\begin{equation}\label{S5}
\begin{array}{l}
\frac{dx}{dt}=\epsilon'[J_0+C_1(\frac{u}{k_n+u}-\frac{x}{x+k}) +C_2(\frac{v}{k_a+v}-\frac{x}{x+k})-C(\frac{x}{k+x}-\frac{y}{k'+y})]\\
\frac{du}{dt}=\epsilon'[J_1-C_1(\frac{u}{k_n+u}-\frac{x}{x+k})] \\
\frac{dv}{dt}=\epsilon'[J_2-C_2(\frac{v}{k_a+v}-\frac{x}{x+k})-C_a(\frac{v}{k_a+v}-\frac{y}{k'+y})]\\
\epsilon\frac{dy}{dt}= F(L-y)+C(\frac{x}{k+x}-\frac{y}{k'+y})+C_a(\frac{v}{k_a+v}-\frac{y}{k'+y}).
\end{array}
\end{equation}

\vskip 1pt
\begin{theorem}
The system ($\ref{S5}$) displays a unique stationary point.
\end{theorem}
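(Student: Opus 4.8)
The plan is to follow the strategy of the two--dimensional Theorem~1, exploiting the triangular, telescoping structure of the stationary equations. Write the four stationary conditions by setting the right--hand sides of (\ref{S5}) to zero (the prefactors $\e',\e$ being irrelevant). The first key observation is that the eight cotransport terms occur in matched pairs with opposite signs across the equations, so that summing all four equations causes every Michaelis--Menten contribution to cancel, leaving simply
\[
J_0+J_1+J_2+F(L-y)=0,
\]
whence $y=y_0:=L+(J_0+J_1+J_2)/F$, a single positive value exactly as in the planar case.

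Next I would solve for the remaining unknowns in terms of $x$. Set $m(s):=s/(k+s)$, and write $m_n,m_a$ for the analogous maps with moduli $k_n,k_a$; each such map is a strictly increasing bijection of $[0,\infty)$ onto $[0,1)$. The second stationary equation reads $C_1(m_n(u)-m(x))=J_1$, which fixes $m_n(u)$, hence $u$, uniquely as a function of $x$. The third equation, being linear in $m_a(v)$ once $y_0$ is frozen, gives
\[
(C_2+C_a)\,m_a(v)=J_2+C_2\,m(x)+C_a\frac{y_0}{k'+y_0},
\]
so that $m_a(v)$, and hence $v$, is likewise an explicit function of $x$.

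It then remains to pin down $x$ from the fourth equation. Substituting the expression for $m_a(v)$ just obtained and using $F(L-y_0)=-(J_0+J_1+J_2)$, the $v$--dependence collapses and the fourth equation becomes affine in the single quantity $X:=m(x)=x/(k+x)$:
\[
\Big(C+\frac{C_aC_2}{C_2+C_a}\Big)\Big(X-\frac{y_0}{k'+y_0}\Big)=(J_0+J_1+J_2)-\frac{C_aJ_2}{C_2+C_a}.
\]
Since the coefficient $C+C_aC_2/(C_2+C_a)$ is strictly positive, this determines $X$ uniquely; monotonicity of $m$ then yields a unique $x=x_0=kX/(1-X)$, and back--substitution produces unique $u_0,v_0$. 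This exhibits the unique stationary point $(x_0,u_0,v_0,y_0)$.

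I expect no deep difficulty here; the work is essentially bookkeeping. The point that must be verified with care is the exact cancellation of the cotransport terms in the global sum, and then the collapse of the $v$--dependence in the last equation to an affine relation in $X$ with nonvanishing leading coefficient. One should also record explicitly that each map $m,m_n,m_a$ is a strictly monotone bijection onto $[0,1)$, since this is precisely what upgrades solvability of the scalar relations to uniqueness; confining the solution to the meaningful quadrant $x,u,v,y\geq 0$ then amounts only to checking that the computed $X$ and $m_\bullet$ values fall in $[0,1)$, which holds under the natural sign conditions on the frozen controls.
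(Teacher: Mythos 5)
Your proof is correct and follows essentially the same route as the paper: sum the four stationary equations so that all cotransport terms cancel and $y_0=L+(J_0+J_1+J_2)/F$, then solve the remaining equations linearly in the Michaelis--Menten quantities $m(x)$, $m_a(v)$, $m_n(u)$ and invert these strictly monotone maps (your sequential substitution is just the paper's $2\times 2$ linear solve done by elimination). Incidentally, your signs are the correct ones: the paper's displayed two-equation system flips the signs of $J_2$ and of $J_0+J_1+J_2$, and this slip propagates into its closed-form expressions for $x_0$, $v_0$, $u_0$, whereas your affine relation $\bigl(C+\tfrac{C_aC_2}{C_2+C_a}\bigr)\bigl(X-\tfrac{y_0}{k'+y_0}\bigr)=(J_0+J_1+J_2)-\tfrac{C_aJ_2}{C_2+C_a}$ checks out (e.g.\ set $C_a=J_2=0$ and compare with the two-dimensional case).
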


\begin{proof}
The equations for the stationary point display:

\begin{equation}
\begin{array}{l}
0=[J_0+C_1(\frac{u}{k_n+u}-\frac{x}{x+k}) +C_2(\frac{v}{k_a+v}-\frac{x}{x+k})-C(\frac{x}{k+x}-\frac{y}{k'+y})]\\
0=[J_1-C_1(\frac{u}{k_n+u}-\frac{x}{x+k})] \\
0=[J_2-C_2(\frac{v}{k_a+v}-\frac{x}{x+k})-C_a(\frac{v}{k_a+v}-\frac{y}{k'+y})]\\
0= F(L-y)+C(\frac{x}{k+x}-\frac{y}{k'+y})+C_a(\frac{v}{k_a+v}-\frac{y}{k'+y}).
\end{array}
\end{equation}
Summing up the $4$ equations yields:

\begin{equation}
y=y_0=L+\frac{J_0+J_1+J_2}{F}.
\end{equation}

Next, the last two equations display:

\begin{equation}
\begin{array}{l}
-(C_2+C_a)\frac{v}{v+k_a}+C_2\frac{x}{x+k}=J_2-C_a\frac{y_0}{k'+y_0}\\
C_a\frac{v}{v+k_a}+C\frac{x}{x+k}=-(J_0+J_1+J_2)+(C+C_a)\frac{y_0}{k'+y_0}.
\end{array}
\end{equation}
This last system can be easily solved into:

\begin{equation}
v=v_0=\frac{k_a[-\frac{CJ_2+C_2(J_0+J_1+J_2}{CC_2+CC_a+C_2C_a}+\frac{y_0}{k'+y_0}]}{1+\frac{CJ_2+C_2(J_0+J_1+J_2}{CC_2+CC_a+C_2C_a}-\frac{y_0}{k'+y_0}},
\end{equation}

\begin{equation}
x=x_0=\frac{k[\frac{C_aJ_2-(C_2+C_a)(J_0+J_1+J_2)}{CC_2+CC_a+C_2C_a}+\frac{y_0}{k'+y_0}]}{1-\frac{C_aJ_2-(C_2+C_a)(J_0+J_1+J_2)}{CC_2+CC_a+C_2C_a}-\frac{y_0}{k'+y_0}}.
\end{equation}

Lastly, the second equation yields:

\begin{equation}
\frac{u}{k_n+u}=\frac{J_1}{F_1}+\frac{x_0}{x_0+k},
\end{equation}

and thus:

\begin{equation}
u=u_0=\frac{k_n[\frac{J_1}{C_1}+\frac{C_aJ_2-(C_2+C_a)(J_0+J_1+J_2)}{CC_2+CC_a+C_2C_a}+\frac{y_0}{k'+y_0}]}{1-[\frac{J_1}{C_1}+\frac{C_aJ_2-(C_2+C_a)(J_0+J_1+J_2)}{CC_2+CC_a+C_2C_a}+\frac{y_0}{k'+y_0}]}.
\end{equation}
\end{proof}
It seems difficult to discuss the question of the stability of this stationary point. But it is quite easy to check that the critical manifold given by the equation:

\begin{equation}
g(y,x,v)= F(L-y)+C(\frac{x}{k+x}-\frac{y}{k'+y})+C_a(\frac{v}{k_a+v}-\frac{y}{k'+y})=0,
\end{equation}

again defines a branch of hypersurface of equation

\begin{equation}
y=\phi(x,v), y\geq 0
\end{equation}

obtained explicitely by solving a quadratic equation. This branch is again attractive because:

\begin{equation}
g'_y(y,v,x)=-F-(C+C_a)\frac{k'}{(k'+x)^2}<0.
\end{equation}
A similar analysis for the dip can be made. It is also possible to use the previous theorem provided that the technical condition b is fullfilled. This yields again to the possibility of a periodic buffering.
\vskip 1pt
The ANLSH (astrocyte-neuron lactate shuttle hypothesis) posed by L. Pellerin and P.J. Magistretti in 1994 (see \cite{PM94, JMW, JAPMW,PM12}) emphasizes that astrocytes act as a syncitium to distribute energy substrates such as lactate to active neurons.  In relation with this astrocyte-neuron metabolic cooperation, it may be interesting to notice that the system ($\ref{S5}$) allows for a simple quasi-stationary analysis. In particular, we can see from the explicit formula obtained above for the equilibrium value $u=u_0$, the influence of the controls term on an increase of $u_0$ which would correspond  to a consumption by neurons of  extracellular lactate.  More precisely:

If $J_0+J_1+J_2$ is decreased then $y_0$ drops.  If 
\begin{equation}
\frac{C_a J_2-(C_2+C_a)(J_0+J_1+J_2)}{CC_2+CC_a+C_2C_a}
\end{equation}
decreases then $x_0$ decreases. Furthermore if $J_1$ is increased then $u_0$ increases  and if 

\begin{equation}
\frac{CJ_2+C_2(J_0+J_1+J_2)}{CC_2+CC_a+C_2C_a}
\end{equation}
increases then $v_0$ decreases. 
\vskip 1pt
 From the mathematical point of view, it seems interesting to see a rather unusual setting of periodic control theory. Indeed previous references of periodic control theory display examples where non-periodic control is used to approach to a given periodic trajectory (like in satellites dynamics see for instance \cite{BFT}, \cite{K}). Here, the periodic orbit is built by the control (or forcing term) $J(t)$. Also there are more mathematical perspectives to develop about the asymptotics related with averaging in a slow manifold.

\end{document}